\newtheoremstyle{mystyle2}{}{}{}{2pt}{\scshape}{.}{ }{}
\newtheoremstyle{mystyle}{}{}{\slshape}{2pt}{\scshape}{.}{ }{}
\newtheoremstyle{etapestyle}{}{}{\itshape}{2em}{\sffamily}{:}{ }{\thmname{#1}}
\newtheoremstyle{definitionstyle}{}{}{}{2pt}{\bfseries}{.}{ }{}
\newtheorem{thm}{Theorem}
\newtheorem{cor}[thm]{Corollary}
\newtheorem{prop}[thm]{Proposition}
\newtheorem{lemme}[thm]{Lemma}
\newtheorem*{prop*}{Proposition}
\theoremstyle{mystyle2}
\theoremstyle{mystyle}
\theoremstyle{remark}
\theoremstyle{etapestyle}
\theoremstyle{definitionstyle}
\DeclareMathOperator{\alg}{alg}
\DeclareMathOperator{\Pic}{Pic}
\DeclareMathOperator{\Id}{Id}
\DeclareMathOperator{\Cl}{Cl}
\DeclareMathOperator{\NS}{NS}
\DeclareMathOperator{\Alb}{Alb}
\DeclareMathOperator{\sm}{sm}
\begin{document}
\renewcommand\refname{References}
\title{Quasi-projectivity of normal varieties}
\author{Olivier BENOIST \\
  \multicolumn{1}{p{.9\textwidth}}{\centering\normalsize{\emph{ENS, DMA, 45 rue d'Ulm}}}\\
  \multicolumn{1}{p{.9\textwidth}}{\centering\normalsize{\emph{75005 Paris, FRANCE}}}}
\date{}
\maketitle
\begin{abstract} 
We prove that a normal variety contains finitely many maximal quasi-projective open subvarieties.
As a corollary, we obtain the following generalization of the Chevalley-Kleiman projectivity criterion :
a normal variety is quasi-projective if and only if every finite subset is contained in an affine open subvariety.
The proof builds on a strategy of W\l odarczyk, using results of Boissi\`ere, Gabber and Serman. 
 \end{abstract}




Let $k$ be an algebraically closed field. A variety is a separated and integral $k$-scheme of finite type.
The goal of this text is to prove the following:

\begin{thm}\label{princ}
A normal variety $X$ contains finitely many maximal quasi-pro\-jec\-tive open subvarieties.
\end{thm}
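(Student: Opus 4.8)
\emph{Strategy of proof.} The plan is to reduce Theorem~\ref{princ} to a finiteness statement that can be read off from combinatorial data on a compactification, to settle the smooth case following W\l odarczyk, and then to reduce the general normal case to the smooth one via alterations. First, existence of maximal opens is free: $X$ is noetherian, so every ascending chain of open subschemes stabilizes and Zorn's lemma applies trivially, whence every quasi-projective open is contained in a maximal one; it remains to bound their number. Next I would reformulate quasi-projectivity divisorially: an open $U\subseteq X$ of finite type over $k$ is quasi-projective if and only if it carries an ample invertible sheaf $L$, and since $X$ is normal such an $L$ is the restriction of $\mathcal{O}_X(\bar D)$ for some Weil divisor $\bar D$ on $X$ (extend $L$ to a coherent sheaf on $X$, take its reflexive hull, and represent it by a Weil divisor; after replacing $L$ by a power one may take $\bar D$ effective, cut out on $U$ by an ample Cartier divisor). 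Thus the Cartier locus of $\bar D$ contains $U$, and the problem becomes: control, up to finitely many possibilities, the pairs consisting of such a $\bar D$ and the open on which $\mathcal{O}_X(\bar D)$ is defined and ample.

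\emph{The smooth case.} Fix a normal proper compactification $\bar X\supseteq X$. Given a maximal quasi-projective open $U$, choose an ample effective Cartier divisor on $U$ and let $\bar D\subseteq\bar X$ be its closure. When $X$, and hence one may arrange $\bar X$, is smooth, $\bar D$ is Cartier on $\bar X$, and W\l odarczyk's method lets one recover $U$, up to finitely many choices, from discrete data attached to $(\bar X,\bar D)$: one embeds the relevant open into a toric variety, where quasi-projective opens correspond to subfans, of which a fan has only finitely many; equivalently, on a suitable model one argues with the finitely many faces of the relevant cone of divisor classes. Either implementation gives Theorem~\ref{princ} in the smooth case.

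\emph{Reduction of the general case.} For $X$ merely normal, and with $k$ of arbitrary characteristic, resolution of singularities is unavailable, so I would invoke Gabber's refinement of de~Jong's alteration theorem: there is a generically finite, proper, surjective $f\colon X'\to X$ with $X'$ smooth and quasi-projective, which over a dense open may be arranged to be finite of controlled degree. One transports a maximal quasi-projective open $U\subseteq X$, together with its divisor $\bar D$, up along $f$, applies the smooth case on $X'$, and descends the resulting finiteness. The descent is where the results of Serman and of Boissi\`ere--Gabber--Serman enter: one needs that quasi-projectivity and the pertinent Weil/Cartier and local $\mathbb{Q}$-factoriality data descend along finite surjective morphisms of normal varieties --- for instance that a finite quotient of a quasi-projective variety is again quasi-projective, and that an ample invertible sheaf upstairs yields one downstairs by a norm construction --- so that the finitely many possibilities on $X'$ force finitely many on $X$.

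\emph{Main obstacle and a remark on the corollary.} The hard part is this last descent. Since $f$ is not birational, distinct maximal quasi-projective opens of $X$ may pull back to the same combinatorial type on $X'$, while conversely one must ensure that the finiteness extracted on $X'$ genuinely reflects finiteness on $X$ and is not collapsed under pushforward; reconciling the combinatorial bookkeeping on the smooth model $X'$ with the Weil-divisor bookkeeping on the normal $X$, and checking that ampleness on $U$ is detected by the descended data, is the technical heart. Finally, once Theorem~\ref{princ} is established the Chevalley--Kleiman-type statement follows formally: if every finite subset of $X$ lies in an affine open and $U_1,\dots,U_n$ are the maximal quasi-projective opens, pick for each $U_i\neq X$ a point $x_i\notin U_i$; an affine open containing $\{x_1,\dots,x_n\}$ is quasi-projective, hence lies in some $U_j$, contradicting $x_j\notin U_j$, so some $U_i$ equals $X$.
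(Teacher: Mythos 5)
Your plan diverges from the paper's proof at its central step, and that step does not work. You propose to reduce the normal case to the smooth case by a Gabber--de Jong alteration $f\colon X'\to X$ and then ``descend the resulting finiteness.'' Two things go wrong. First, the alteration $X'$ produced by de Jong/Gabber is itself quasi-projective, so the smooth case on $X'$ is vacuous: $X'$ has exactly one maximal quasi-projective open, namely $X'$, and no nontrivial combinatorial data survives upstairs to be descended. Second, and fatally, quasi-projectivity does not descend along proper, generically finite, non-finite surjections: Hironaka's examples give smooth complete non-projective threefolds $X$ admitting a projective birational $X'\to X$ with $X'$ projective, so ``$f^{-1}(U)$ quasi-projective $\Rightarrow U$ quasi-projective'' is simply false for alterations. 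The descent statement you cite (a finite surjection onto a normal target with quasi-projective source has quasi-projective target, EGA II 6.6.2) requires \emph{finiteness} of the morphism everywhere, not just over a dense open, and is exactly what the paper uses in its reduction to algebraically closed base fields (Theorem \ref{princK}) --- but it cannot be applied to an alteration. Your sketch of the smooth case is also not W\l odarczyk's argument (there is no toric embedding or subfan count in his proof), though that case is at least genuinely known in characteristic $0$.

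More importantly, the proposal never engages with the actual difficulty the paper is organized around: for a normal $X$ the closure $\bar D$ of an ample divisor on $U$ is only a Weil divisor, there exist complete normal surfaces with $\Pic=0$, and one must (a) control where Weil divisor classes are Cartier and (b) compare ampleness of algebraically equivalent Weil divisors that are Cartier only on $U$. The paper does this with no smoothing at all: it compactifies by Nagata, stratifies $X$ into finitely many locally closed pieces on which Cartier-ness of Weil divisors is constant (Theorem \ref{BGSTh}, from Boissi\`ere--Gabber--Serman), uses the finite generation of $\NS(X)$ to produce a nontrivial relation $\sum\alpha_{i_u}D_{i_u}\equiv_{\alg}\sum\beta_{j_v}D_{j_v}$ among the closures of the ample divisors, and then needs Proposition \ref{amplessi} (proved via a $U$-admissible blow-up and P\'epin's theorem) to transfer ampleness across algebraic equivalence. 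None of these ingredients, nor substitutes for them, appear in your outline, so the gap is not a technical detail to be filled but the core of the theorem.
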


The hypothesis that $k$ is algebraically closed is mainly for convenience. We will explain later how to remove it (see Theorem \ref{princK}).

Theorem \ref{princ} was proved by W\l odarczyk 
in \cite{Wlo} 
when $X$ has a normal compac\-tification $\bar{X}$ such that
$\dim_{\mathbb{Q}}(\Cl(\bar{X})/\Pic(\bar{X}))\otimes\mathbb{Q}<\infty$.
In particular, this settled the case where $X$ is complete and $\mathbb{Q}$-factorial,
or smooth of characteristic $0$. 

As W\l odarczyk points out, these statements imply generalizations of the Chevalley-Kleiman projectivity criterion
(proved by Kleiman \cite{Kleimnum} for complete and $\mathbb{Q}$-factorial normal varieties, see also \cite{Hartshorneample} Theorem I 9.1).
Accordingly, we obtain:

\begin{cor}\label{Kleiman}
 A normal variety $X$ is quasi-projective if and only if every finite subset of $X$ is contained in an affine open subvariety of $X$.
\end{cor}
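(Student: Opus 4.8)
The plan is to derive the corollary from Theorem \ref{princ} by a short pigeonhole argument; all the substance is in that theorem. The forward implication is the classical fact that finitely many points of a quasi-projective variety lie in a common affine open subvariety. I would recall its proof briefly: embed $X$ as a locally closed subvariety of some $\mathbb{P}^N$, let $\bar X$ be its closure and $Y=\bar X\setminus X$ (a closed subscheme of $\mathbb{P}^N$). Given a finite $S\subseteq X$, Serre vanishing makes $\mathcal{I}_Y(d)$ globally generated for $d\gg 0$, so, combining the finitely many nonvanishing conditions and using that $k$ is infinite, there is a degree-$d$ form $F$ vanishing on $Y$ with $F(p)\neq 0$ for every $p\in S$. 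Then $\mathbb{P}^N\setminus V(F)$ is affine, $\bar X\setminus V(F)\subseteq \bar X\setminus Y=X$, so $\bar X\setminus V(F)$ is a closed subscheme of an affine scheme, hence an affine open subvariety of $X$, and it contains $S$.

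For the reverse implication, assume every finite subset of $X$ is contained in an affine open subvariety, and let $U_1,\dots,U_r$ be the finitely many maximal quasi-projective open subvarieties provided by Theorem \ref{princ}. I would first record that every quasi-projective — in particular every affine — open subvariety $V\subseteq X$ is contained in some $U_i$. Indeed, $X$ is a Noetherian topological space, so any strictly increasing chain of open subsets is finite; hence, starting from $V$ and enlarging it to a strictly larger quasi-projective open subvariety as long as this is possible, one reaches after finitely many steps a quasi-projective open subvariety that is maximal, i.e. equal to some $U_i$, and which contains $V$.

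It then remains to show that $U_i=X$ for some $i$. Suppose not; then for each $i$ we may pick a point $x_i\in X\setminus U_i$. By hypothesis the finite set $\{x_1,\dots,x_r\}$ lies in an affine open subvariety of $X$, hence by the previous paragraph in some $U_j$; but $x_j\notin U_j$, a contradiction. Therefore $X$ itself is one of the $U_i$, which is exactly the statement that $X$ is quasi-projective.

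Since Theorem \ref{princ} is taken as input, there is no genuine obstacle in this deduction; the only two points needing a little attention are the precise justification of the easy direction (handled by the hypersurface construction above, or by any standard reference) and the cofinality statement of the second paragraph, which is the reason the Noetherianity of the underlying space is invoked.
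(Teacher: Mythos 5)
Your argument is correct and is exactly the intended deduction (the paper leaves it implicit, citing W\l odarczyk): every affine open lies in one of the finitely many maximal quasi-projective opens from Theorem \ref{princ}, and choosing a point outside each of them yields a finite set that cannot lie in any affine open unless some maximal piece is all of $X$. Your justifications of the easy direction and of the cofinality of maximal quasi-projective opens via Noetherianity are both standard and sound.
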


As explained in \cite{Wlo} Theorem D, it is possible to apply
this quasi-projectivity criterion to prove the following corollary. It is very close to results of Raynaud (\cite{Raynaudample} V Corollaire 3.14) and Brion
(\cite{Brionaction} Theorem 1), and could also have been proved by their methods.

\begin{cor}\label{groupaction}
 Let $G$ be a connected algebraic group acting on a normal variety $X$, and let $U$ be a quasi-projective open subvariety of $X$. Then $G\cdot U$ is
quasi-projective.
\end{cor}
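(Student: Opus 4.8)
The plan is to identify $G\cdot U$ with one of the finitely many maximal quasi-projective open subvarieties furnished by Theorem \ref{princ}. First I would check that $G\cdot U=\bigcup_{g\in G}g\cdot U$ is an open subvariety of $X$: for each $g$ the map $x\mapsto g\cdot x$ is an automorphism of $X$, so $g\cdot U$ is open, hence so is the union; and $G\cdot U$ is $G$-stable because $G\cdot(G\cdot U)=G\cdot U$. Being an open subscheme of $X$, it is again separated, integral, of finite type and normal, so Theorem \ref{princ} applies to it: let $V_1,\dots,V_n$ be its maximal quasi-projective open subvarieties. The quasi-projective open subvariety $U$ of $G\cdot U$ is contained in one of them, say $U\subseteq V_1$ (every quasi-projective open subvariety of a normal variety being contained in a maximal one).

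Next I would note that $G$ acts on the finite set $\{V_1,\dots,V_n\}$: for $g\in G$ the automorphism $x\mapsto g\cdot x$ of $X$ maps $G\cdot U$ onto itself, hence permutes its maximal quasi-projective open subvarieties. It then suffices to prove that $V_1$ is $G$-stable, for then $G\cdot U\subseteq G\cdot V_1=V_1\subseteq G\cdot U$, so $V_1=G\cdot U$ is quasi-projective.

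To prove $V_1$ is $G$-stable, consider its stabilizer $\Omega=\{g\in G:g\cdot V_1=V_1\}$, a subgroup of index at most $n$ in $G$. The key point is that $\Omega$ is a constructible subset of $G$. Indeed, writing $a\colon G\times X\to X$ for the action, the subset $a^{-1}(X\setminus V_1)\cap(G\times V_1)$ is locally closed in $G\times X$, so by Chevalley's theorem its image under the first projection is constructible; its complement is exactly $\{g\in G:g\cdot V_1\subseteq V_1\}$. Applying this also to the inversion of $G$ shows that $\{g\in G:g^{-1}\cdot V_1\subseteq V_1\}$ is constructible, and $\Omega$ is the intersection of these two sets (if $g\cdot V_1\subseteq V_1$ and $g^{-1}\cdot V_1\subseteq V_1$ then $V_1\subseteq g\cdot V_1$, whence equality). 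Since a constructible subgroup of an algebraic group is closed, $\Omega$ is a closed subgroup of finite index in the connected group $G$; writing $G$ as a finite disjoint union of (closed) cosets of $\Omega$, connectedness forces $\Omega=G$.

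The main obstacle is this last rigidity step: the action of $G$ on $\{V_1,\dots,V_n\}$ is \emph{a priori} only an action of the abstract group, and one has to exhibit enough algebraicity --- via Chevalley's theorem together with the facts that constructible subgroups are closed and that a connected group has no proper closed subgroup of finite index --- to conclude that $G$ fixes $V_1$. The remaining ingredients are formal: the openness and normality of $G\cdot U$, the stability of the notion of maximal quasi-projective open subvariety under automorphisms, and the final identification $V_1=G\cdot U$.
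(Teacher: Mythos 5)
Your argument is correct, but it takes a genuinely different route from the one the paper has in mind. The paper gives no proof of this corollary: it points to Theorem D of \cite{Wlo}, which deduces the statement from the Chevalley--Kleiman criterion (Corollary \ref{Kleiman}). That route goes as follows: given a finite set $x_1,\dots,x_m\in G\cdot U$, each set $\{g\in G : g^{-1}\cdot x_i\in U\}$ is open and nonempty, and $G$ is irreducible (being a connected algebraic group), so there is a single $g$ with $g^{-1}\cdot x_i\in U$ for all $i$; since $U$ is quasi-projective, the points $g^{-1}\cdot x_i$ lie in a common affine open $W\subseteq U$, and $g\cdot W$ is an affine open containing all the $x_i$. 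Corollary \ref{Kleiman}, applied to the normal variety $G\cdot U$, then concludes. You instead apply Theorem \ref{princ} directly to $G\cdot U$ and show that the maximal quasi-projective open $V_1$ containing $U$ is $G$-stable, via Chevalley's constructibility theorem, the fact that a constructible subgroup is closed, and the absence of proper closed finite-index subgroups in a connected group. Both proofs are valid and both exploit connectedness of $G$ in an essential way; yours is self-contained modulo standard algebraic-group rigidity facts and bypasses Corollary \ref{Kleiman} entirely, while the paper's intended argument is shorter and more elementary once that criterion is available, replacing your stabilizer analysis by the single observation that finitely many nonempty open conditions on an irreducible group can be met simultaneously. (Minor points you rightly address in passing: $G\cdot U$ is open, hence normal, so Theorem \ref{princ} applies to it; and every quasi-projective open lies in a maximal one by noetherianity.)
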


That Theorem \ref{princ} holds for any normal variety was conjectured by Bia\l ynicki-Birula in \cite{BB}.
The reason for Bia\l ynicki-Birula's interest in this conjecture was that, using the results
of \cite{BB}, it implies the following corollary:

\begin{cor}
 In characteristic $0$, a normal variety on which a reductive group $G$ acts contains finitely many open subvarieties that admit a
quasi-projective good quotient and that
are maximal with respect to $G$-saturated inclusion.
\end{cor}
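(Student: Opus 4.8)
\emph{Proof proposal.}
The plan is to derive the statement from Theorem~\ref{princ} via the reduction carried out by Bia\l ynicki-Birula in \cite{BB}. Fix $G$ reductive, $X$ normal, and work in characteristic $0$. For a $G$-invariant open subvariety $U\subseteq X$ I write $U/\!/G$ for its good quotient when one exists; in characteristic $0$ with $G$ reductive, $U/\!/G$ is then again a normal variety, the morphism $U\to U/\!/G$ is affine, and the restriction of a good quotient to a $G$-saturated open subset is still a good quotient. Hence the set $\mathcal{U}$ of $G$-invariant open subvarieties of $X$ admitting a quasi-projective good quotient is stable under passing to $G$-saturated open subsets, and what has to be shown is that $\mathcal{U}$ has finitely many maximal elements for the partial order defined by $G$-saturated inclusion.

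The first step would be to reformulate membership in $\mathcal{U}$ by applying Corollary~\ref{Kleiman} to the \emph{normal} quotient variety. The points of $U/\!/G$ are the closed $G$-orbits of $U$; since $U\to U/\!/G$ is affine, the preimage of an affine open subvariety of $U/\!/G$ is a $G$-saturated affine open subvariety of $U$, and conversely the good quotient of a $G$-saturated affine open subvariety of $U$ is an affine open subvariety of $U/\!/G$. So, once $U$ is known to admit a good quotient, $U/\!/G$ is quasi-projective if and only if every finite set of closed $G$-orbits of $U$ is contained in a $G$-saturated affine open subvariety of $U$. Any such open subvariety is in particular a quasi-projective open subvariety of $X$, hence is contained in one of the finitely many maximal quasi-projective open subvarieties $V_1,\dots,V_n$ of $X$ provided by Theorem~\ref{princ}.

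The second step is combinatorial, and it is the content of \cite{BB}. One encodes each $U\in\mathcal{U}$ by its family of closed $G$-orbits together with the way these orbits are clustered by the saturation relation, and uses the finiteness of the list $V_1,\dots,V_n$ to bound the number of maximal such configurations, and hence the number of maximal elements of $\mathcal{U}$. This is precisely the implication ``if $X$ has finitely many maximal quasi-projective open subvarieties, then the conclusion of the corollary holds'' established in \cite{BB}; Theorem~\ref{princ} supplies its hypothesis, which was the conjecture formulated in \cite{BB}.

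The hard part is this second step, and it belongs to \cite{BB} rather than to the present note. The delicate point is that the $V_i$ are not $G$-invariant, so one cannot simply intersect them with $G$-translates; and although $G\cdot V_i$ is quasi-projective by Corollary~\ref{groupaction}, it need not admit a good quotient, so that does not resolve the issue either. One has to control how $G$-saturation interacts with a finite covering of $U$ by pieces lying inside the $V_i$, and deduce from this that only finitely many maximal clustering patterns occur --- morally, a version valid for non-quasi-projective ambient varieties of the finiteness of geometric invariant theory quotients under variation of the linearisation. Granting that reduction from \cite{BB}, the proof amounts to invoking it together with Theorem~\ref{princ}.
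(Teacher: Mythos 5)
Your proposal is correct and matches the paper's treatment: the paper gives no independent proof of this corollary, but simply observes that the results of \cite{BB} establish the implication ``Theorem \ref{princ} $\Rightarrow$ the corollary,'' so that proving Theorem \ref{princ} (Bia\l ynicki-Birula's conjecture) is all that remains. Your additional discussion of good quotients and of Corollary \ref{Kleiman} is reasonable motivation, but the substance of your argument --- deferring the reduction to \cite{BB} and invoking Theorem \ref{princ} --- is exactly what the paper does.
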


  Note that, by examples of Serre described in \cite{Ferrand}, 6.2, 6.3
(see also \cite{Jelonek}), those statements are not true in general for nonnormal varieties.

\vspace{1em}

  Let us explain why those statements are more difficult in the normal case than in the smooth case.
If $X$ is a complete normal surface
such that every finite subset of $X$ is contained in an affine open subvariety of $X$,
it should be projective by Corollary \ref{Kleiman}. In particular, we should have $\Pic(X)\neq 0$.
When $X$ is smooth, this is not a surprise because any nonzero effective Weil divisor is Cartier
and has nontrivial class in the Picard group. However, there exist complete normal surfaces with trivial
Picard group (see \cite{Schroer}). This shows that the
hypothesis about finite subsets of $X$ is needed even to prove that $\Pic(X)\neq 0$.

For this reason, we need techniques to construct Cartier divisors on normal varieties. This is the
role of the technical hypothesis in W\l odarczyk's theorem. For this purpose, we
will use the work of Boissi\`ere, Gabber, and Serman \cite{BGS}, and a theorem of P\'epin \cite{Pepin}.

\vspace{1em}

Let us turn to the proof of Theorem \ref{princ}.
The following proposition removes the hypothesis $\dim_{\mathbb{Q}}(\Cl(X)/\Pic(X))\otimes\mathbb{Q}<\infty$
in Lemma 4 of \cite{Wlo}.

\begin{prop} \label{proputile}
Let $X$ be a complete normal variety. Then there exists a po\-si\-tive integer $r$ such that the following holds.
Let $U_1,\dots, U_s$ be
quasi-projective open subvarieties of $X$ with $s>r$
such that for any $1\leq i,j\leq s$, $U_i\setminus (U_i\cap U_j)$ is of codimension $\geq 2$ in $U_i$.
Then there exist
distinct indices $i_1,\dots,i_m,j_1,\dots,j_n\in\{1,\ldots, s\}$ with $m,n\geq 1$
such that $U=(U_{i_1}\cap\dots\cap U_{i_m})\cup(U_{j_1}\cap\dots\cap U_{j_n})$
is quasi-projective. 
\end{prop}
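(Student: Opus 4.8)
The plan is to reduce, via the codimension hypothesis, the whole statement to a pigeonhole among the ample classes of the $U_i$ in a single class group, and then to invoke \cite{BGS} and \cite{Pepin} precisely to make that class group effectively finite-dimensional.

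\emph{Setup and reduction.} Since each $U_i$ is normal and $U_i\setminus(U_i\cap U_j)$ has codimension $\geq 2$ in $U_i$, restriction gives isomorphisms $\Cl(U_i)\xrightarrow{\sim}\Cl(U_i\cap U_j)\xleftarrow{\sim}\Cl(U_j)$, compatible on triple intersections; one thus gets a group $\bar C$ with canonical identifications $\Cl(U_i)\cong\bar C$, and concretely $\bar C\cong\Cl(X)/G$ where $G$ is generated by the codimension-one components of $X\setminus U_i$ (which do not depend on $i$, by the same argument). For each $i$ pick an ample line bundle on $U_i$ and let $\ell_i\in\Pic(U_i)\subseteq\Cl(U_i)=\bar C$ be its class. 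If $I=\{i_1,\dots,i_m\}$ and $J=\{j_1,\dots,j_n\}$ are disjoint and nonempty, then $A:=U_{i_1}\cap\dots\cap U_{i_m}$ and $B:=U_{j_1}\cap\dots\cap U_{j_n}$ are quasi-projective (open subschemes of a quasi-projective variety), and $A$, $B$, $A\cap B$, $A\cup B$ all have class group canonically $\bar C$ (their complements in $U_{i_1}$ are finite unions of codimension-$\geq 2$ closed sets). Moreover each $\ell_{i_k}$ ($k\in I$) lies in $\Pic(A)$ and $\ell_{i_k}|_A$ is ample on $A$ (restriction of an ample bundle to an open subscheme), and similarly for $B$. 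Hence: \emph{if there are disjoint nonempty $I,J$ and positive integers $(a_i)_{i\in I}$, $(b_j)_{j\in J}$ with $\sum_{i\in I}a_i\ell_i=\sum_{j\in J}b_j\ell_j$ in $\bar C$, then $U=A\cup B$ is quasi-projective.} Indeed this common class $\gamma\in\bar C=\Cl(U)$ is represented over $A$ by the ample line bundle $\bigotimes_{i\in I}(\ell_i|_A)^{a_i}$ and over $B$ by the ample line bundle $\bigotimes_{j\in J}(\ell_j|_B)^{b_j}$; being Cartier on the cover $\{A,B\}$ it is Cartier on $U$, giving a line bundle $M$ on $U$ whose restrictions to $A$ and to $B$ are ample. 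By the gluing lemma for ample bundles under the codimension-$\geq 2$ hypothesis (as in \cite{Wlo}; the point is that $M$ is recovered from its restrictions to $A$ and $B$, and sections of a line bundle on a normal variety extend across codimension $\geq 2$), $M$ is ample, so $U$ is quasi-projective.

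\emph{The main obstacle.} It remains to produce such indices and coefficients. This is exactly where the finiteness hypothesis of Lemma 4 of \cite{Wlo} must be removed: $\bar C$, and even $\bar C/\Pic(X)$, may have infinite $\mathbb{Q}$-rank, so one cannot pigeonhole directly. The key step — and the genuinely hard one — is to use the constructions of Cartier divisors from \cite{BGS} and \cite{Pepin} to show that there is an integer $\rho$, depending only on $X$, such that the ample classes can be chosen so that $\Lambda:=\langle\ell_1,\dots,\ell_s\rangle\subseteq\bar C$ has $\mathbb{Q}$-rank at most $\rho$. Roughly: P\'epin's semi-factorial modifications together with the Boissi\`ere--Gabber--Serman theorem (finite covers making a given Weil divisor Cartier) allow one to transport the classes $\ell_i$, after multiplying by a bounded integer, to honest line bundles on a fixed proper variety dominating $X$, where N\'eron--Severi-type finiteness bounds the number of numerically independent classes; the technical content is to carry out this transport while retaining enough positivity. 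Granting this, set $r:=\rho+1$.

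\emph{Pigeonhole and conclusion.} Suppose $s>r=\rho+1$. In the $\mathbb{Q}$-vector space $\Lambda_{\mathbb{Q}}\oplus\mathbb{Q}$, of dimension $\leq\rho+1<s$, the vectors $(\ell_i,1)$ for $1\leq i\leq s$ are linearly dependent: $\sum_{i=1}^s c_i(\ell_i,1)=0$ with rational $c_i$ not all zero. Then $\sum_i c_i\ell_i=0$ in $\Lambda_{\mathbb{Q}}$ and $\sum_i c_i=0$; the latter relation forces both $I:=\{i:c_i>0\}$ and $J:=\{i:c_i<0\}$ to be nonempty, and they are disjoint, so $m:=|I|\geq 1$ and $n:=|J|\geq 1$. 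Clearing denominators turns $\sum_{i\in I}c_i\ell_i=\sum_{j\in J}(-c_j)\ell_j$ in $\Lambda_{\mathbb{Q}}$ into an equality in $\bar C$ up to a torsion element of $\Lambda$; multiplying through by its order yields an honest equality $\sum_{i\in I}a_i\ell_i=\sum_{j\in J}b_j\ell_j$ in $\bar C$ with all $a_i,b_j$ positive integers. By the reduction of the first paragraph, $U=(U_{i_1}\cap\dots\cap U_{i_m})\cup(U_{j_1}\cap\dots\cap U_{j_n})$ is quasi-projective, which is the assertion. Thus the only step that is not essentially formal is the confinement of the $\ell_i$ to a subgroup of bounded rank; everything else is bookkeeping with the codimension hypothesis and an elementary pigeonhole.
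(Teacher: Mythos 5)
Your reduction paragraph and your pigeonhole are fine as far as they go, but the entire difficulty of the proposition is concentrated in the step you explicitly defer (``Granting this\dots''), and the statement you propose to grant --- that the ample classes $\ell_1,\dots,\ell_s$ can be confined to a subgroup $\Lambda\subseteq\bar C=\Cl(U_i)$ of $\mathbb{Q}$-rank bounded in terms of $X$ alone --- is not provable and is not what the cited tools deliver. N\'eron--Severi-type finiteness only controls classes \emph{up to algebraic equivalence}; the kernel $\Cl(X)\to\NS(X)$ is (the group of $k$-points of) an abelian variety, hence divisible and of infinite $\mathbb{Q}$-rank whenever it is nonzero, so no rank count inside $\Cl$ or inside $\Pic$ of any fixed modification can ever produce the \emph{linear} equivalence $\sum_{i\in I}a_i\ell_i=\sum_{j\in J}b_j\ell_j$ in $\bar C$ that your gluing step requires. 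The most the pigeonhole can give (and this is free, needing only that $\NS(X)$ has finite rank $\rho$, with no input from \cite{BGS} or \cite{Pepin}) is a relation up to algebraic equivalence. With only such a relation your argument breaks in two places: the class $\sum_{i\in I}a_i\ell_i$ is no longer visibly Cartier on $B$ (it differs from $\sum_{j\in J}b_j\ell_j$ by a nontrivial element of $\Cl^0$), and even where it is Cartier its ampleness on the non-proper variety $B$ does not follow from that of an algebraically equivalent divisor without further work.

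The paper repairs exactly these two points, and that is where \cite{BGS} and \cite{Pepin} actually enter. First, Cartier-ness of the relevant divisors on all of $U$ is obtained not from the relation but from the stratification of Theorem \ref{BGSTh}: one takes $r=2^N\rho$ so that, by pigeonhole on the $2^N$ possible sets of strata met, some $\rho+1$ of the $U_i$ meet exactly the same strata with property $(\ast)$, which forces the closure of an ample divisor on one $U_{i_u}$ to be Cartier on every other $U_i$ in the subfamily, hence on $U$. (Note that your $r=\rho+1$ is too small precisely because it omits this step.) Second, the transfer of ampleness on $U$ between the algebraically equivalent, $U$-Cartier divisors $D$ and $D'$ is the content of Proposition \ref{amplessi}, whose proof needs Lemma \ref{blowup}: after multiplying by the order of $P(X)_U/P(X)_U^0$, the difference class lies in $P(X)_U^0$ and becomes an honest line bundle, algebraically equivalent to $0$, on the normalization of a $U$-admissible blow-up constructed via P\'epin's theorem; only then does W\l odarczyk's Lemma 1 apply. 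So the gap is not a missing technical verification but a misplacement of the difficulty: the rank bound you want does not exist, and the correct substitute is the pair (stratification for Cartier-ness, Proposition \ref{amplessi} for ampleness) rather than any strengthening of the pigeonhole.
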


Using this proposition, W\l odarczyk's proof goes through:

\begin{proof}[$\mathbf{Proof \text{ }of \text{ }Theorem\text{ }\ref{princ}}$]~
Compactifying $X$ by Nagata's theorem, and normalizing this compactification, we may suppose that $X$ is complete.
We may then follow the proof of Theorem A of \cite{Wlo} using our Proposition \ref{proputile} instead of Lemma 4 of loc. cit.
\end{proof}

We are now reduced to proving Proposition \ref{proputile}. There are two steps where W\l odarczyk's arguments
break down under our more general hypotheses. The first is to construct a suitable Weil divisor on $X$ that is Cartier on $U$. The
second is to show that this divisor
is ample on $U$ via a numerical criterion, although it may not be Cartier on all of $X$.
Both of these difficulties will be overcome using results of \cite{BGS}.

To deal with the first one, we will use
\cite{BGS} 6.7, which is stated in \cite{BGS} as a corollary of the proof of their Th\'eor\`eme 6.1.
Since it is crucial for our needs, we recall the statement below (Theorem \ref{BGSTh}) and develop its proof a bit.
This result had already been obtained by Bingener and Flenner in characteristic $0$ (\cite{BinFlen} Corollary 3.6).

The second one will be solved using Proposition \ref{amplessi} below, which replaces here Lemma 1 of \cite{Wlo}, and whose proof
uses \cite{BGS} again, via our Lemma \ref{blowup}.

\vspace{1em}

 Let us first recall some material from \cite{BGS}.

If $X$ is a normal variety, the set of linear equivalence classes of Weil divisors on $X$ is in bijection
with the set of isomorphism classes of rank $1$ reflexive sheaves on $X$. It is an abelian group: the class group $\Cl(X)$ of $X$.

 If
$X$ is a projective normal variety with a marked smooth point $a$, $\Alb(X)$ is the Albanese variety of $X$:
it is endowed with a rational
map $\alpha_X:X\dashrightarrow \Alb(X)$ that is universal among rational maps from $X$ to an abelian variety sending
$a$ to $0$. Let $P(X)$ be the dual abelian variety of $\Alb(X)$. The Poincar\'e bundle on
$\Alb(X)\times P(X)$ identifies $P(X)(k)$ with the set of rank $1$ reflexive sheaves on $X$ algebraically equivalent to $0$
or, equivalently,
with the set of linear equivalence classes of Weil divisors on $X$ that
are algebraically equivalent to $0$
(\cite{BGS} Proposition 3.2).
We obtain
an exact sequence: $$0\to P(X)(k)\to\Cl(X)\to \NS(X)\to 0,$$ where $\NS(X)$ is the abelian group of algebraic equivalence classes
of Weil divisors on $X$:
the N\'eron-Severi group of $X$. The group $\NS(X)$ is of finite type by \cite{BrunoKahn} Th\'eor\`eme 3.

We see from this construction that $P(X)$ is a birational invariant of $X$. Moreover, if $p:X^0\to X$ is a birational morphism
between projective normal varieties, and if $D\equiv_{\alg}0$ is a Weil divisor on $X^0$, then the linear equivalence classes of
$D$ and $p_*D$ are represented by the same element of $P(X^0)(k)=P(X)(k)$.

 The proof of \cite{BGS} Th\'eor\`eme 6.1 shows that, if $x\in X$, the subset of $P(X)(k)$ consisting
of classes that are Cartier at $x$ is
the set of $k$-points of a closed subgroup $A_x$ of $P(X)$.  
If $U$ is an open subset of $X$, we will denote by $P(X)_U$ the set-theoretical intersection $\bigcap_{x\in U} A_x$:
it is a closed subgroup of $P(X)$ whose $k$-points represent the classes
that are Cartier on $U$. Of course, if $p:X'\to X$ is a birational morphism that is an isomorphism over $U$, $P(X')_U=P(X)_U$.
Let us denote by $P(X)_U^0$ the connected component of $0$ in $P(X)_U$.

 The following lemma will be used in the proof of Proposition \ref{amplessi}.
We recall that a $U$-admissible blow-up is a blow-up whose center is disjoint from $U$.

\begin{lemme}\label{blowup}
 Let $X$ be a projective normal variety, and let $U$ be an open subvariety. Then there exists a
$U$-admissible blow-up $X'\to X$ such that
every class in $P(X)_U^0(k)$ is Cartier on the normalization $\tilde{X'}$.
\end{lemme}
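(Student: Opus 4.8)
The plan is to package all the classes of $P(X)_U^0(k)$ into a single rank $1$ reflexive sheaf on a product, and then to make that sheaf invertible by a blow-up of $X$ disjoint from $U$. Write $A:=(P(X)_U^0)_{\mathrm{red}}$, an abelian subvariety of $P(X)$ with $A(k)=P(X)_U^0(k)$, and let $V\subseteq X$ be the dense open subset on which the Albanese map $\alpha_X$ is a morphism; since $X$ is normal, $X\setminus V$ has codimension $\geq 2$. Then $X\times A$ is normal (a normal variety times a smooth variety), the complement of $V\times A$ has codimension $\geq 2$ in it, and I would set $\mathcal F:=j_*\bigl((\alpha_X\times\mathrm{id}_A)^*\mathcal P_A\bigr)$, where $j\colon V\times A\hookrightarrow X\times A$ is the inclusion and $\mathcal P_A$ is the restriction to $\Alb(X)\times A$ of the Poincar\'e bundle. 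This is a rank $1$ reflexive sheaf on $X\times A$, to be thought of as the universal family of the reflexive sheaves attached to the classes parametrised by $A$.

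The first steps would pin down its properties. By the compatibility of the formation of reflexive hulls with the flat projection $X\times A\to A$ (pushforward from a relatively big open subset, cf.\ \cite{BGS}), $\mathcal F$ is flat over $A$ and $\mathcal F|_{X\times\{a\}}$ is the rank $1$ reflexive sheaf of the class $a\in P(X)(k)$; in particular $\mathcal F|_{X\times\{0\}}\cong\mathcal O_X$. As every $a\in A(k)$ is Cartier on $U$, the fibrewise criterion for local freeness gives that $\mathcal F$ is invertible on $U\times A$, so its non-locally-free locus $W$ is a closed subset of $(X\setminus U)\times A$. I would also record, from the see-saw theorem applied to $\mathcal P$, the translation law $(\mathrm{id}_X\times t_b)^*\mathcal F\cong(\mathcal F\otimes\pi_1^*\mathcal F_b)^{\vee\vee}$ for $b\in A(k)$, where $\mathcal F_b$ is the reflexive sheaf of $b$ and $\pi_1\colon X\times A\to X$; since $\mathcal F_b$ is invertible on $U$, this says that $W$ differs from each of its $A$-translates only within the non-Cartier locus of a class, and in particular that $\pi_1(W)$ is a closed subset of $X\setminus U$ (as $\pi_1$ is proper).

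The lemma then reduces to producing a $U$-admissible blow-up $X'\to X$ such that the birational transform of $\mathcal F$ on $\tilde{X'}\times A$ is locally free: restricting such a sheaf to $\tilde{X'}\times\{a\}$ exhibits each $a\in A(k)$ as a Cartier class on $\tilde{X'}$. A single reflexive rank $1$ sheaf can always be made invertible by blowing it up, but here $W$ is not of the form $Z\times A$ (it misses $X\times\{0\}$), so a blow-up of $X\times A$ resolving $\mathcal F$ need not be pulled back from one of $X$. This is precisely where \cite{BGS} is used: the analysis in the proof of their Th\'eor\`eme 6.1 — from which Corollaire 6.7 is extracted — makes it possible to resolve such a family by a $U$-admissible blow-up of the base $X$ alone. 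I expect the argument to run by Noetherian induction: perform a $U$-admissible blow-up, normalise, and use the translation law above together with \cite{BGS} to see that a numerical invariant of the pair $(X,\mathcal F)$ strictly drops, so that the process terminates with $\mathcal F$ locally free.

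The main obstacle is exactly this last step: turning the fibrewise information ``each $\mathcal F_a$ is invertible away from $X\setminus U$'' into one blow-up of $X$ itself — not merely of $X\times A$ — that simultaneously resolves the whole abelian family, and proving that it terminates. The translation-equivariance of $\mathcal F$ under $A$ and the finiteness built into \cite{BGS} are what make this feasible; the construction of $\mathcal F$ and the reduction of the lemma to its resolution are formal.
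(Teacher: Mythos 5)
Your overall architecture (a universal rank $1$ reflexive family on $X\times P(X)_U^0$, to be made invertible by a $U$-admissible blow-up of $X$ alone) is the right one and matches the paper, but there are two genuine gaps. First, the claim that $\mathcal F=j_*\bigl((\alpha_X\times\mathrm{id})^*\mathcal P_A\bigr)$ is flat over $A$ with $\mathcal F|_{X\times\{a\}}$ equal to the reflexive sheaf of the class $a$ is not automatic and is in fact the delicate point: restriction to a fibre does not commute with $j_*$, and in \cite{BGS} the identification $\mathcal L_U|_{X\times[D]}\simeq\mathcal O_X(D)$ is only established over an \emph{open} subset $\Omega_U$ of the parameter space. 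The paper gets invertibility of $\mathcal L_U$ on all of $(X^{\sm}\cup U)\times P(X)_U^0$ by combining this partial fibrewise information with Ramanujam--Samuel's theorem, which shows that the invertibility locus of $\mathcal L_U$ \emph{is} a product $W_{0,U}\times P(X)_U^0$ (note that this contradicts your assertion that the bad locus cannot be a product because it misses $X\times\{0\}$ --- that assertion rests on the unproved fibrewise description of $\mathcal F$ over $0$). Your ``fibrewise criterion for local freeness'' on $U\times A$ presupposes exactly what needs to be proved.

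Second, and more seriously, the step you yourself flag as the main obstacle --- converting invertibility of the family over $(X^{\sm}\cup U)\times P(X)_U^0$ into a single $U$-admissible blow-up of $X$ --- is not a Noetherian induction with a dropping invariant hidden in \cite{BGS}; it is supplied by an external input you do not mention, namely P\'epin's theorem (\cite{Pepin} Th\'eor\`eme 2.1), which produces an $(X^{\sm}\cup U)$-admissible blow-up $X'\to X$ and a line bundle $\mathcal M$ on $X'\times P(X)_U^0$ extending $\mathcal L_U$. Without this (or an equivalent flattening statement for line bundles in families), your sketch does not close. Finally, even granting the extension, one must still normalise $X'$, twist by $\mathcal M|_{X'\times\{0\}}^{-1}$, and verify that the resulting line bundle on $\tilde{X'}\times\{x\}$ actually represents the class $x$ (the paper does this via the birational invariance of $P(X)$ and the identification on $X^{\sm}$); this identification step is absent from your proposal.
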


\begin{proof}[$\mathbf{Proof}$]~
Let $X^{\sm}$ be the smooth locus of $X$, and let $j:X^{\sm}\to X$ be the inclusion.
Following the proof of \cite{BGS} Th\'eor\`eme 4.2 closely, we will construct a universal
line bundle on $(X^{\sm}\cup U)\times P(X)_U^0$.
To do so, let $\mathcal{E}_U$ be the universal line bundle on $X^{\sm}\times P(X)_U^0$
(it is the restriction of the universal line bundle
$\mathcal{E}$ on $X^{\sm}\times P(X)$), and let $\mathcal{L}_U=(j\times\Id)_*\mathcal{E}_U$.
By \cite{BGS} Lemme 2.4, $\mathcal{L}_U$ is a coherent
sheaf on $X\times P(X)_U^0$. The first step of the proof of \cite{BGS} Th\'eor\`eme 4.2 (i.e.,
applying Ramanujam-Samuel's theorem to the projection $X\times P(X)_U^0\to X$)
shows that the locus
where $\mathcal{L}_U$ is invertible is of the form
$W_{0,U}\times P(X)_U^0$. The second step of this same proof explains how to construct
an open subset $\Omega_U$ of $P(X)_U^0$
such that if $D\equiv_{\alg}0$ is a Weil divisor on $X$ whose class $[D]$ belongs to $\Omega_U(k)$,
$\mathcal{L}_U|_{X\times[D]}\simeq\mathcal{O}_X(D)$. Hence, if $x\in\Omega_U(k)$, $\mathcal{L}_U|_{X\times\{x\}}$ is invertible
on $X^{\sm}\cup U$, so that $\mathcal{L}_U$ is invertible on $(X^{\sm}\cup U)\times\Omega_U$ by \cite{BGS} Lemme 2.6.
Combining these two steps shows that $\mathcal{L}_U$ is invertible
on $(X^{\sm}\cup U)\times P(X)_U^0$.

We may then apply \cite{Pepin} Th\'eor\`eme 2.1 (we use it in a situation very similar to the one P\'epin designed it for):
there exists a $(X^{\sm}\cup U)$-admissible
blow-up $X'\to X$ and a line bundle $\mathcal{M}$
on $X'\times P(X)_U^0$ such that $\mathcal{M}$ coincides with $\mathcal{L}_U$ over $(X^{\sm}\cup U)\times P(X)_U^0$.
Let $\mathcal{M}_0=\mathcal{M}|_{X'\times\{ 0\}}$: it is a line bundle on $X'$ that is trivial on $X^{\sm}\cup U$.
We set $\mathcal{M}'=\mathcal{M}\otimes p_1^*\mathcal{M}_0^{-1}$, and
denote by $\tilde{\mathcal{M}}'$ its pull-back to the normalization $\tilde{X'}\times P(X)_U^0$.

We are ready to show that the construction works. Let $x\in P(X)_U^0(k)$.
Consider the invertible sheaf $\tilde{\mathcal{M}}'|_{\tilde{X}'\times\{x\}}$ on
$\tilde{X'}$. It is algebraically equivalent to $\tilde{\mathcal{M}}'|_{\tilde{X'}\times \{0\}}\simeq \mathcal{O}_{\tilde{X}'}$
because $P(X)_U^0$ is connected: it corresponds to a point $y\in P(X)(k)$. 
Its restriction to $X^{\sm}$ is precisely $\mathcal{L}_U|_{X^{\sm}\times \{x\}}\simeq\mathcal{E}_U|_{X^{\sm}\times \{x\}}$.
Hence $y$ is represented on $X$ by
the reflexive sheaf $j_*(\mathcal{E}_U|_{X^{\sm}\times \{x\}})$.
This shows that $y=x$: in particular, $x$ is represented by an invertible sheaf on $\tilde{X'}$,
which is what we wanted.
\end{proof}

We may now prove:

\begin{prop}\label{amplessi}
 Let $X$ be a complete normal variety, let $U$ be an open subvariety of $X$, and let $D_1$ and $D_2$ be
Weil divisors on $X$. Suppose that
$D_1\equiv_{\alg} D_2$ and that $D_1$ and $D_2$ are Cartier on $U$. Then $D_1$ is ample on $U$
if and only if $D_2$ is ample on $U$.
\end{prop}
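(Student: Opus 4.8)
The plan is the following. The statement is symmetric in $D_1$ and $D_2$, so it is enough to show that if $D_1$ is ample on $U$ then $D_2$ is ample on $U$. Set $D:=D_1-D_2$: it is Cartier on $U$ and algebraically equivalent to $0$ on $X$, hence numerically trivial, but it need not be Cartier on the whole of $X$ — which is precisely why one cannot directly invoke the numerical criterion for ampleness (Lemma 1 of \cite{Wlo}). The idea is to use Lemma \ref{blowup} to pass to a birational model of $X$ that is an isomorphism over $U$ and on which $D$, as well as $D_1$, become genuine Cartier divisors, and then to conclude with the numerical criterion.

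First I would carry out some reductions, none of which changes $U$, the restrictions $D_i|_U$, or the conclusion. Since $D_1$ is ample on $U$, the variety $U$ is quasi-projective, so Chow's lemma provides a projective normal variety with a birational morphism to $X$ that is an isomorphism over $U$ (the morphism is projective and birational, its source is quasi-projective by Chow's lemma and proper over the complete $X$, hence complete, hence projective; then normalize). One checks that $D_1\equiv_{\alg}D_2$ is preserved, so we may assume $X$ projective. Next, replacing $D_1$ and $D_2$ by equal positive multiples — which affects neither the hypotheses nor the conclusion, ampleness being insensitive to positive multiples — we may assume that the class of $D$, which lies in $P(X)_U(k)$ because $D\equiv_{\alg}0$ and $D$ is Cartier on $U$, actually lies in $P(X)_U^0(k)$ (the component group $P(X)_U/P(X)_U^0$ being finite).

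Then I would apply Lemma \ref{blowup} to $(X,U)$: there is a $U$-admissible blow-up with normalization $X'\to X$, projective normal and an isomorphism over $U$, on which every class of $P(X)_U^0(k)=P(X')_U^0(k)$ is Cartier. In particular the class of $D$ is represented on $X'$ by an actual Cartier divisor $N$ with $N\equiv_{\alg}0$ (hence $N\equiv_{\mathrm{num}}0$) and $N|_U\sim D|_U$. Finally, applying the argument behind Lemma \ref{blowup} — that is, \cite{BGS} Th\'eor\`eme 4.2 and \cite{Pepin} Th\'eor\`eme 2.1 — to the single reflexive sheaf $\mathcal{O}_X(D_1)$, which is invertible on $X^{\sm}\cup U$, in place of a family, and pulling $N$ back, we may assume in addition that $D_1$ is a Cartier divisor on $X'$; then so is $D_2=D_1-N$. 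At this point $X'$ is projective and normal, $D_1$ and $D_2$ are Cartier divisors, $D_1|_U$ is ample on $U$, and $D_1-D_2=N\equiv_{\mathrm{num}}0$. The numerical criterion for ampleness on an open subvariety — for Cartier divisors on a fixed projective variety, the property that the restriction to $U$ be ample on $U$ depends only on the numerical equivalence class; this is the content of Lemma 1 of \cite{Wlo} in the Cartier case to which we have reduced, and can also be seen from the fact that $L|_U$ is ample on $U$ exactly when the augmented base locus of $L$ avoids $U$, together with the numerical invariance of augmented base loci — then yields that $D_2|_U$ is ample on $U$, as desired.

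The step I expect to be the main obstacle is the passage, via Lemma \ref{blowup}, to a model on which $D=D_1-D_2$ becomes Cartier while remaining algebraically trivial: this is exactly where W\l odarczyk's argument fails for divisors that are Cartier only on $U$, and it is what forces the use of the results of \cite{BGS} and \cite{Pepin}. The reduction to a projective model by Chow's lemma (together with the verification that algebraic equivalence is preserved), the analogous Cartierization of $D_1$, and the final appeal to the numerical criterion are, by comparison, routine.
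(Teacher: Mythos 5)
Your proposal is correct and follows essentially the same route as the paper: reduce to a class in $P(X)_U^0$ by taking multiples, use Lemma \ref{blowup} to represent the difference $D_1-D_2$ by a Cartier divisor algebraically equivalent to $0$ on a normalized $U$-admissible blow-up that is an isomorphism over $U$, and conclude by the numerical criterion of \cite{Wlo} Lemma 1. The only deviations are inessential: your second blow-up making $D_1$ itself Cartier is redundant, since \cite{Wlo} Lemma 1 already applies to Weil divisors $D_1'$ and $D_1'+\Delta$ whose difference $\Delta$ is Cartier and numerically trivial (which is how the paper concludes, taking for $D_1'$ any lift with $\pi_*D_1'=D_1$), and your reduction to the projective case should invoke not plain Chow's lemma but the refined statement that a proper variety admits a projective $U$-admissible blow-up for any quasi-projective open $U$ (the augmented-base-locus alternative you mention is also shakier than the direct appeal to \cite{Wlo} Lemma 1, especially in positive characteristic).
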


\begin{proof}[$\mathbf{Proof}$]~
Since $P(X)_U(k)/P(X)_U^0(k)$ is a finite group, we may suppose,
up to replacing $D_1$ and $D_2$ by $nD_1$ and $nD_2$
for some positive integer $n$, that $D_1-D_2$ represents a class in $P(X)_U^0(k)$.

Let $\pi:\tilde{X'}\to X$ be a normalized blow-up as in Lemma \ref{blowup}. It is then possible to find a Cartier divisor $\Delta$ on $\tilde{X'}$ that
is algebraically equivalent to $0$ and such that $\pi_*\Delta=D_2-D_1$. 

Choose any Weil divisor $D_1'$ on $\tilde{X'}$ such that $\pi_* D_1'=D_1$, and set $D_2'=D'_1+\Delta$. By \cite{Wlo} Lemma 1,
we see that $D'_1|_U$ is ample if and only if $D'_2|_U$ is ample.
Since $D_1|_U=D_1'|_U$ and  $D_2|_U=D_2'|_U$, the result follows.
\end{proof}

To state the last result of \cite{BGS} that we will need, we will use the following notation.
If $Z$ is a subset of a normal variety $X$, we say that $Z$ has property
$(\ast)$ (resp. $(\ast_{0})$) if for any $x,y\in Z$, a Weil divisor on $X$ (resp. that is algebraically equivalent to $0$)
is Cartier at $x$ if and only if it is Cartier at $y$. 

\begin{thm}[\cite{BGS} 6.7]\label{BGSTh}
Let $X$ be a normal variety. Then there exists a finite partition $X=\bigsqcup_{k=1}^N Z_k$ by irreducible locally closed subsets
with property $(\ast)$. 
\end{thm}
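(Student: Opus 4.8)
~
The plan is to reduce to the case where $X$ is projective, run a Noetherian induction on closed subsets, and at the generic point of each stratum use the universal-family arguments recalled above to replace the possibly huge set of local Cartier conditions by finitely many. First, cover $X$ by finitely many affine, hence quasi-projective, open subvarieties $X_1,\dots,X_n$. For a locally closed $Z\subseteq X_i$, having property $(\ast)$ relative to Weil divisors on $X$ is the same as having it relative to Weil divisors on $X_i$, since $\Cl(X)\to\Cl(X_i)$ is surjective and Cartierness at a point of $X_i$ is unchanged by this restriction. Partitioning $X_1$, then $X_2\setminus X_1$ (by intersecting a partition of $X_2$ with $X_2\setminus X_1$ and passing to irreducible components), and so on, it is enough to treat each $X_i$, so we may assume $X$ quasi-projective; choosing a projective normal compactification $\overline{X}$ of $X$ and intersecting a partition of $\overline{X}$ with $X$ (again using surjectivity of $\Cl(\overline{X})\to\Cl(X)$), we reduce to $X$ projective and normal.

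It now suffices to show that every irreducible closed subvariety $Y\subseteq X$ contains a dense open subset with property $(\ast)$: Noetherian induction then builds the partition, by splitting off such an open set from $Y$ and recursing on the complementary closed set, componentwise. So fix $Y$ with generic point $\eta$, and for $x\in Y$ let $N_x\subseteq\Cl(X)$ be the subgroup of classes Cartier at $x$; an open $V\subseteq Y$ has property $(\ast)$ exactly when $x\mapsto N_x$ is constant on $V$. Because the locus where a given Weil divisor is not Cartier is closed in $X$ and $\overline{\{\eta\}}=Y$, a divisor not Cartier at $\eta$ is not Cartier at any point of $Y$; equivalently $N_x\subseteq N_\eta$ for every $x\in Y$. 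Hence it remains to produce a dense open $V\subseteq Y$ on which every class of $N_\eta$ is Cartier.

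The quotient $N_\eta/(N_\eta\cap P(X)(k))$ embeds into the finitely generated group $\NS(X)$, so it is finitely generated, and by the proof of \cite{BGS} Th\'eor\`eme 6.1 one has $N_\eta\cap P(X)(k)=A_\eta(k)$ for a closed subgroup $A_\eta\subseteq P(X)$ with $A_\eta(k)/A_\eta^0(k)$ finite. Therefore $N_\eta$ is generated by $A_\eta^0(k)$ together with finitely many classes $[D_1],\dots,[D_r]$ of divisors Cartier at $\eta$, which are Cartier on some common open neighbourhood $W_1$ of $\eta$. For the classes in $A_\eta^0(k)$ one imitates the proof of Lemma \ref{blowup}, but producing an open neighbourhood of $\eta$ instead of an admissible blow-up: with $j:X^{\sm}\hookrightarrow X$ and $\mathcal{E}$ the universal line bundle on $X^{\sm}\times P(X)$, the sheaf $\mathcal{L}=(j\times\Id)_*(\mathcal{E}|_{X^{\sm}\times A_\eta^0})$ is coherent on $X\times A_\eta^0$ (\cite{BGS} Lemme 2.4), its invertibility locus is of the form $W_2\times A_\eta^0$ for an open $W_2\subseteq X$ (first step of the proof of \cite{BGS} Th\'eor\`eme 4.2, via Ramanujam-Samuel applied to the projection to $X$), and $\eta\in W_2$ because a point of the nonempty open set built in the second step of that proof lies in $A_\eta^0(k)\subseteq N_\eta$, hence corresponds to a divisor Cartier at $\eta$. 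Combining the two steps with \cite{BGS} Lemme 2.6 then shows that $\mathcal{L}|_{X\times\{x\}}\simeq\mathcal{O}_X(D_x)$ near $\eta$ for all $x\in A_\eta^0(k)$, so every class in $A_\eta^0(k)$ is Cartier on $W_2$. Thus $V:=Y\cap W_1\cap W_2$ is a dense open subset of $Y$ on which every class of $N_\eta$ is Cartier, whence $N_x=N_\eta$ on $V$ and $V$ has property $(\ast)$.

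The main obstacle is precisely this last point: at $\eta$ one must make \emph{all} divisor classes that are Cartier at $\eta$ --- and there may be uncountably many, those in $A_\eta^0(k)$ --- Cartier on one and the same neighbourhood, so one cannot argue class by class, and this is exactly what the universal line bundle on $X^{\sm}\times P(X)$, the Ramanujam-Samuel argument yielding a product invertibility locus, and \cite{BGS} Lemme 2.6 are used for. Everything else --- the reduction to the projective case, the closedness of non-Cartier loci, and the Noetherian induction --- is formal.
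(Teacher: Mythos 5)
Your proposal is correct in outline and, for the N\'eron--Severi part, is exactly the paper's argument: reduce to $X$ projective, run a noetherian induction, and at the generic point $\eta$ of each stratum use the finite generation of $\NS(X)$ to pick finitely many divisors $D_1,\dots,D_r$ Cartier at $\eta$ whose classes generate the image of $N_\eta$ in $\NS(X)$, then shrink to an open set where they are all Cartier; your observation that $N_x\subseteq N_\eta$ for $x\in Y$ by closedness of the non-Cartier locus is also the paper's. Where you genuinely diverge is in the treatment of the classes algebraically equivalent to $0$: the paper does not rerun the universal-line-bundle machinery here, it simply takes as input the partition of $X$ into strata with property $(\ast_0)$ extracted from the proof of \cite{BGS} Th\'eor\`eme 6.1 and refines that partition, so that $D'\equiv_{\alg}0$ Cartier at $x$ is Cartier at $y$ for free. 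You instead re-derive the $(\ast_0)$ statement at $\eta$ by running the $(j\times\Id)_*\mathcal{E}$ / Ramanujam--Samuel / Lemme 2.6 argument over $A_\eta^0$ --- the same argument as the paper's Lemma \ref{blowup}, with an open neighbourhood of $\eta$ replacing the admissible blow-up. This buys self-containedness (you correctly identify that the blow-up is only needed to make classes Cartier everywhere, not near $\eta$), but it is also the one soft spot: in Lemma \ref{blowup} the \emph{fixed} open set $X^{\sm}\cup U$ on which all fibres of $\mathcal{L}_U$ are invertible is supplied by the definition of $P(X)_U$, whereas for you ``Cartier at $\eta$'' only yields a neighbourhood depending on the class, so your deduction ``$\eta\in W_2$'' (passing from invertibility of the fibre $\mathcal{L}|_{X\times\{[D]\}}$ at $\eta$ to invertibility of $\mathcal{L}$ at $(\eta,[D])$) invokes \cite{BGS} Lemme 2.6 without the uniform open set it requires. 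This uniformity is exactly what the $(\ast_0)$-partition packages, so the cleanest repair is to cite that partition as the paper does; with that substitution your proof coincides with the paper's.
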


\begin{proof}[$\mathbf{Proof}$]~
The existence of such a partition is local on $X$. We may thus suppose that $X$ is affine.
Taking its closure in projective space, and normalizing this compactification, we may suppose $X$ is projective.

The proof of Th\'eor\`eme 6.1 of \cite{BGS} shows the existence of a partition
$X=\bigsqcup_{l=1}^M Y_l$ by irreducible locally closed subsets
with property $(\ast_0)$.
We will refine this partition to obtain one as we want. To do this, it suffices, by noetherian induction,
to prove the following: for any $l$, there exists an open subset $Z_l$ of $Y_l$ with property $(\ast)$.

Let $\eta$ be the generic point of $Y_l$ and let $\Gamma$ be the subgroup of $\NS(X)$ consisting
of elements that have representatives that are
Cartier at $\eta$. Since $\NS(X)$ is of finite type, $\Gamma$ is of finite type,
and we may find Weil divisors $D_1,\ldots, D_t$ on $X$ that are Cartier at $\eta$ and
whose images in $\NS(X)$ generate $\Gamma$. Let $Z_l$ be the open subset of $Y_l$ where $D_1,\ldots, D_t$ are Cartier.
Let us show that $Z_l$ has property $(\ast)$. Let $x,y\in Z_l$ and let $D$ be a Weil divisor on $X$ Cartier at $x$.
It is then also Cartier at $\eta$
and we may write $D=\sum_i\lambda_iD_i+D'$ with $D'\equiv_{\alg}0$ and $\lambda_i\in\mathbb{Z}$.
By choice of $Z_l$, $\sum_i\lambda_iD_i$ is Cartier on $Z_l$. Then $D'$ is Cartier at $x$,
hence at $y$ by the property $(\ast_0)$ of $Y_l$. This shows, as wanted, that $D$ is Cartier at $y$.
\end{proof}

We may now prove Proposition \ref{proputile}.

\begin{proof}[$\mathbf{Proof \text{ }of \text{ }Proposition\text{ }\ref{proputile}}$]~
Let $X=\bigsqcup_{k=1}^N Z_k$ be a partition as in Theorem \ref{BGSTh},
and let $\rho$ be the rank of the N\'eron-Severi group $\NS(X)$ of $X$.
Let us show that $r=2^N\rho$ works. By the pigeonhole principle, up to reordering the $U_i$, it is possible to suppose that
$U_1,\dots,U_{\rho+1}$ meet exactly the same strata $Z_k$ of the partition. 

For $1\leq i\leq \rho+1$, let $D_i$ be an effective
ample Cartier divisor on $U_i$. We still denote by $D_i$ the effective Weil divisor that is its closure in $X$.
Since $\NS(X)$ is of rank $\rho$, it is possible to find a nontrivial relation of the form 
$$\sum_{u=1}^m \alpha_{i_u}D_{i_u}\equiv_{\alg} \sum_{v=1}^n \beta_{j_v}D_{j_v},$$ where
$i_1,\dots,i_m,j_1,\dots,j_n\in\{1,\ldots, \rho+1\}$ are distinct indices and $\alpha_{i_u},\beta_{j_v}$ are positive integers.
Note that since a nonzero effective divisor cannot be algebraically equivalent to $0$, we have $m,n\geq 1$.

We set $U=(U_{i_1}\cap\dots\cap U_{i_m})\cup(U_{j_1}\cap\dots\cap U_{j_n})$,  $D=\sum_{u=1}^m \alpha_{i_u}D_{i_u}$, and
$D'=\sum_{v=1}^n \beta_{j_v}D_{j_v}$.
The Weil divisor $D_{i_u}$ is Cartier on $U_{i_u}$, hence on all $Z_k$ that meet $U_{i_u}$
by the property $(\ast)$ of the strata. By the choice of
$U_1,\dots,U_{\rho+1}$, this shows that $D_{i_u}$ is Cartier on $U_i$ for $1\leq i\leq \rho+1$.
In particular, it is Cartier on $U$. Consequently,
$D$ is Cartier on $U$. The same argument shows that $D'$ is Cartier on $U$.

Now $D$ is ample on $U_{i_1}\cap\dots\cap U_{i_m}$ and $D'$ is ample on $U_{j_1}\cap\dots\cap U_{j_n}$.
By Proposition \ref{amplessi},
$D$ is ample on both $U_{i_1}\cap\dots\cap U_{i_m}$ and $U_{j_1}\cap\dots\cap U_{j_n}$.
By \cite{Wlo} Lemma 2, it is then ample on $U$: this shows that
$U$ is quasi-projective.
\end{proof}

Let us finally explain how to remove the hypothesis that the base field is algebraically closed in Theorem \ref{princ}.

\begin{thm}\label{princK}
Let $K$ be a field and let $X$ be a separated and normal $K$-scheme of finite type. 
Then $X$ contains finitely many maximal quasi-pro\-jec\-tive open subschemes.
\end{thm}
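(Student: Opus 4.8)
The plan is to deduce Theorem~\ref{princK} from Theorem~\ref{princ} by combining a purely inseparable base change with Galois descent. I would first dispose of trivial reductions: a normal scheme is the disjoint union of its finitely many connected components, each an integral normal variety over $K$, and a finite disjoint union of quasi-projective $K$-schemes is again quasi-projective (take a line bundle restricting to an ample one on each component), so the maximal quasi-projective open subschemes of $X$ are exactly the disjoint unions of maximal quasi-projective open subschemes of the components. Hence we may assume $X$ is an integral normal variety over $K$.

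Next I would reduce to the geometrically normal case. There is a finite purely inseparable extension $K_0/K$ such that the normalization $Y$ of $(X_{K_0})_{\mathrm{red}}$ is geometrically normal over $K_0$ (this is standard; in characteristic $0$ take $K_0=K$). Since $X$ is normal it is geometrically unibranch, hence so is $(X_{K_0})_{\mathrm{red}}$ (a universal homeomorphism induces an equivalence of \'etale sites, and the number of branches is \'etale-local), so the finite morphism $\pi\colon Y\to X$ is a universal homeomorphism. Thus $\pi$ gives an inclusion-preserving bijection between open subschemes of $Y$ and of $X$, and I would check that it matches quasi-projective opens with quasi-projective opens: if $U\subseteq X$ is quasi-projective over $K$ then $\pi^{-1}(U)$ is finite, hence projective, over $U_{K_0}$, hence quasi-projective over $K_0$; conversely, a finite radicial surjection is, up to a power of Frobenius, split, so if $\pi^{-1}(U)$ carries an ample line bundle $L$ then some power of $L$ descends to a line bundle on $U$, which is ample because for a finite surjective morphism a line bundle is ample once its pullback is. Hence it suffices to prove the theorem for $Y/K_0$; replacing $X$ by $Y$ and $K$ by $K_0$, we may assume $X$ is geometrically normal over $K$.

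Now let $\bar K$ be an algebraic closure of $K$ and $G=\mathrm{Gal}(K^{\mathrm{sep}}/K)$. Geometric normality gives $X_{\bar K}=\bigsqcup_{i=1}^{c}X_i$ with the $X_i$ normal varieties over $\bar K$, so by Theorem~\ref{princ} each $X_i$ has a finite set $\mathcal M_i$ of maximal quasi-projective open subvarieties, and, $X_i$ being Noetherian, every quasi-projective open of $X_i$ lies in one of them. It follows that an open subscheme of $X_{\bar K}$ is quasi-projective over $\bar K$ if and only if it is contained in one of the finitely many opens $N_{\vec\jmath}=\bigsqcup_i M_{i,j_i}$ with $M_{i,j_i}\in\mathcal M_i$, and that the $N_{\vec\jmath}$ are precisely the maximal quasi-projective open subschemes of $X_{\bar K}$. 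On the other hand $|X|=|X_{\bar K}|/G$, so open subschemes of $X$ correspond, compatibly with inclusions, to $G$-stable open subschemes of $X_{\bar K}$, and under this correspondence $U$ is quasi-projective over $K$ if and only if $U_{\bar K}$ is quasi-projective over $\bar K$ (base change in one direction; in the other, descend quasi-projectivity to a finite subextension by a limit argument, then apply Galois descent along its separable part and Frobenius-domination along its purely inseparable part, as above). Therefore the maximal quasi-projective opens of $X$ correspond to the maximal elements of the poset $\mathcal P$ of $G$-stable quasi-projective opens of $X_{\bar K}$. Since $G$ permutes the finite set $\{N_{\vec\jmath}\}$, any $\tilde U\in\mathcal P$ is contained in $Q_{\mathcal O}:=\bigcap_{N\in\mathcal O}N$ for the $G$-orbit $\mathcal O$ of some $N_{\vec\jmath}$ containing $\tilde U$; each $Q_{\mathcal O}$ is $G$-stable and quasi-projective (a finite intersection of quasi-projective opens is quasi-projective, being open in any one of them), and there are only finitely many orbits, so $\mathcal P$ has finitely many maximal elements. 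This proves the theorem.

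I expect the main obstacle to be not a single hard step but the care needed in the two descent arguments, along $Y\to X$ and along $K^{\mathrm{sep}}/K$: quasi-projectivity is \emph{not} formally insensitive to radicial base change, since line bundles themselves need not descend, only their $p$-power roots. What rescues the argument is the combination of Frobenius-domination of finite radicial surjections with the ampleness criterion for finite surjective morphisms. Granting those facts, the existence of $K_0$, and the reduction to connected $X$, the rest is bookkeeping with Noetherian posets and the finite sets furnished by Theorem~\ref{princ}.
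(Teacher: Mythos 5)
Your argument reaches the same conclusion as the paper but by a genuinely different and considerably longer route. The paper argues by contradiction: given infinitely many distinct maximal quasi-projective opens $U_n$, it pulls them back to the normalization $\tilde{X}_{\bar{K}}$ of $X_{\bar{K}}$, applies Theorem \ref{princ} componentwise to find two indices with $\tilde{U}_{1,\bar{K}}\cup\tilde{U}_{2,\bar{K}}$ quasi-projective, spreads this out to a finite extension $L/K$ by the limit theorems of \cite{EGA43}, and then descends quasi-projectivity in a single stroke using \cite{EGA2} Corollaire 6.6.2 (a finite surjective morphism with quasi-projective source and normal target has quasi-projective target). You instead describe the whole poset: you reduce to geometrically normal $X$ by a purely inseparable extension plus Frobenius-domination, and then classify the $G$-stable quasi-projective opens of $X_{\bar{K}}$ via orbit-intersections of the finitely many maximal ones. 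Your route yields more information (an explicit description of the maximal opens of $X$ as the descents of the $Q_{\mathcal{O}}$), at the cost of several extra layers of reduction (connected components, geometric normality, the topological quotient $|X|=|X_{\bar{K}}|/G$) that the paper's contradiction argument never needs.

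The one step you should justify more carefully is ``apply Galois descent along its separable part.'' Descending an ample line bundle along a finite Galois extension is not naive Galois descent: the $G$-invariant isomorphism class $\bigotimes_{\sigma}\sigma^{*}M$ need not carry a descent datum, the obstruction being a degree-two Galois cohomology class. What is true, and what you actually need, is that \emph{quasi-projectivity} (not a given line bundle) descends along a finite locally free surjection, via the norm of an invertible sheaf (\cite{EGA2} 6.5--6.6), or along an arbitrary finite surjection onto a normal base (\cite{EGA2} Corollaire 6.6.2). Once that result is invoked, it applies directly to the single finite surjective morphism onto the normal scheme $U$ obtained from your limit argument, making the separable/inseparable factorization and the Frobenius-domination step unnecessary --- which is precisely the shortcut the paper takes. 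With that substitution your proof is correct; as written, the Galois-descent step is the only genuinely under-justified point.
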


\begin{proof}[$\mathbf{Proof}$]~
Suppose that $(U_n)_{n\in\mathbb{N}}$ are distinct maximal quasi-projective open subschemes of $X$.
Let $\bar{K}$ be an algebraic closure of $K$, let 
$\tilde{X}_{\bar{K}}$ be the normalization of $X_{\bar{K}}$ and let $\tilde{U}_{n,\bar{K}}$ be the inverse image of $U_n$ in $\tilde{X}_{\bar{K}}$.
Since the pull-back of an ample line bundle by a finite morphism is ample, the $(\tilde{U}_n)_{n\in\mathbb{N}}$ are distinct quasi-projective
open subschemes of $\tilde{X}_{\bar{K}}$. 

Applying Theorem \ref{princ} to the connected components of $\tilde{X}_{\bar{K}}$, we see that $\tilde{X}_{\bar{K}}$ contains only finitely many
maximal quasi-projective open subschemes. Hence, we may suppose that 
$\tilde{U}_{1,\bar{K}}$ and $\tilde{U}_{2,\bar{K}}$ are contained in the same maximal quasi-projective open subscheme,
so that $\tilde{U}_{1,\bar{K}}\cup\tilde{U}_{2,\bar{K}}$ is quasi-projective.

 We then use limit arguments. By \cite{EGA43}, Th\'eor\`eme 8.8.2 (ii), there exist a finite extension $L$ of $K$ and a scheme $Y$ of finite type over $L$ such that
$Y\times_L\bar{K}\simeq\tilde{X}_{\bar{K}}$. Moreover, by \cite{EGA43} Th\'eor\`eme 8.8.2 (i) and Th\'eor\`eme 8.10.5 (x), up to replacing $L$ by a finite extension,
there exists a finite morphism $Y\to X_L$ inducing the normalization
$\tilde{X}_{\bar{K}}\to X_{\bar{K}}$. Let $V$ denote the inverse image of $U_1\cup U_2$ in $Y$.
  By \cite{EGA43} Th\'eor\`eme 8.10.5 (xiv),
up to enlarging $L$ again, we may suppose that $V$ is quasi-projective.

The finite and surjective morphism $V\to U_1\cup U_2$ has normal target, so that the
hypotheses of \cite{EGA2} Corollaire 6.6.2 are satisfied, showing that $U_1\cup U_2$ is quasi-projective.
By maximality of $U_1$ and $U_2$, we must have $U_1=U_2$: this is a contradiction.
\end{proof}


\bibliographystyle{plain}
\bibliography{biblio}

\end{document}